\newcommand\version{September 14, 2023}
\newtheorem{theorem}{Theorem}
\newtheorem{corollary}[theorem]{Corollary}
\theoremstyle{definition}
\theoremstyle{remark}
\newtheorem{remark}[theorem]{Remark}
\newtheorem{remarks}[theorem]{Remarks}
\newcommand{\1}{\mathbbm{1}}
\newcommand{\C}{\mathbb{C}}
\renewcommand{\epsilon}{\varepsilon}
\newcommand{\loc}{{\rm loc}}
\newcommand{\N}{\mathbb{N}}
\renewcommand{\phi}{\varphi}
\newcommand{\R}{\mathbb{R}}
\newcommand{\w}{\mathrm{weak}}
\begin{document}

\title[A div-curl inequality for orthonormal functions --- \version]{A div-curl inequality for orthonormal functions}

\author{Rupert L. Frank}
\address[Rupert L. Frank]{Mathe\-matisches Institut, Ludwig-Maximilans Universit\"at M\"unchen, The\-resienstr.~39, 80333 M\"unchen, Germany, and Munich Center for Quantum Science and Technology, Schel\-ling\-str.~4, 80799 M\"unchen, Germany, and Mathematics 253-37, Caltech, Pasa\-de\-na, CA 91125, USA}
\email{r.frank@lmu.de}

\thanks{\copyright\, 2023 by the author. This paper may be reproduced, in its entirety, for non-commercial purposes.\\
	Partial support through US National Science Foundation grant DMS-1954995, as well as through the Deutsche Forschungsgemeinschaft through Germany’s Excellence Strategy EXC-2111-390814868 and through TRR 352 – Project-ID 470903074 is acknowledged.}

\dedicatory{Dedicated to Fritz Gesztesy on the occasion of his 70th birthday}

\begin{abstract}
	We prove a bound on the sum of the product of curl-free and divergence-free vector fields. Under appropriate orthonormality conditions our bound scales sublinearly in the number of terms, similar in spirit to Lieb--Thirring inequalities.
\end{abstract}

\maketitle

\section{Introduction and main result}

We are interested in pairs of vector fields $E$ and $B$ on $\R^d$ with $d\geq 2$ that satisfy $E,B\in L^2(\R^d,\R^d)$. Clearly their pointwise product $E\cdot B$ belongs to $L^1(\R^d)$. A deep observation in the area of compensated compactness is that this trivial fact can be improved under additional constraints on the curl of $E$ and the divergence of $B$, specifically under the assumptions
\begin{equation}
	\label{eq:divcurl}
	\nabla\wedge E = 0 
	\qquad\text{and}\qquad
	\nabla\cdot B = 0 \,.
\end{equation}
These equations are understood in the weak sense, that is,
$$
\int_{\R^d} ((\partial_j u) E_k - (\partial_k u) E_j)\,dx = 0
\qquad\text{for all}\ 1\leq j,k\leq d \,,
$$
and
$$
\sum_{j=1}^d \int_{\R^d} (\partial_j u) B_j\,dx = 0
$$
for all $u\in C^1_c(\R^d)$.

A theorem of Coifman, Lions, Meyer and Semmes \cite{CoLiMeSe} states that under the assumptions \eqref{eq:divcurl} the product $E\cdot B$ belongs to the Hardy space $\mathcal H^1(\R^d)$. An earlier compactness result in a related spirit, called the div-curl lemma, goes back to Murat and Tartar. 

The difference between $L^1$ and $\mathcal H^1$ is small, but of crucial importance in several applications in geometric analysis, nonlinear elasticity, homogenization, conservation laws and other fields. The definition of the Hardy space can be found, for instance, in \cite[Section 2.1]{Gr}. Since it is not relevant for what follows, we omit it. Instead we are interested here in the weaker assertion that, still under \eqref{eq:divcurl},
\begin{equation}
	\label{eq:dualsob}
	E\cdot B \in \dot W^{-1,\frac{d}{d-1}}(\R^d) \,.
\end{equation}
The space on the right side is the dual of the homogeneous Sobolev space $\dot W^{1,d}(\R^d)$, the latter being the space of equivalence classes modulo constants of elements in $L^1_\loc(\R^d)$ that are weakly differentiable with gradient in $L^d(\R^d,\R^d)$. By the Poincar\'e inequality, we have $\dot W^{1,d}(\R^d)\subset BMO(\R^d)$, the space of functions (modulo constants) of bounded mean oscillations; see \cite[Section 3.1]{Gr}. By duality \cite[Section 3.2]{Gr}, this implies $\mathcal H^1(\R^d) \subset \dot W^{-1,\frac{d}{d-1}}(\R^d)$, so \eqref{eq:dualsob} follows from the Coifman--Lions--Meyer--Semmes theorem. In the case $d=2$ the bound \eqref{eq:dualsob} appears in work of Wente \cite{We} on surfaces with prescribed mean curvature; see also \cite[Appendix]{BrCo}.

In recent years there has been some interest in extending inequalities from classical and harmonic analysis to the setting of orthonormal functions. The origin of these investigations lies in the Lieb--Thirring inequality, an extension of a certain Sobolev interpolation inequality to the setting of orthonormal functions, which was a crucial ingredient in the proof of stability of matter by Lieb and Thirring \cite{LiTh0,LiTh}. This inequality, and its generalizations, have found many applications ever since, and we refer to \cite{Fr2,FrLaWe,Fr3} for references. Among the inequalities that have been generalized to the setting of orthonormal functions are the Sobolev and Hardy--Littlewood--Sobolev inequality \cite{Li2}, the Strichartz inequality \cite{FrLeLiSe,FrSa,BeHoLeNaSa,BeLeNa}, the Stein-Tomas inequality \cite{FrSa} and Sogge's spectral clusters inequality \cite{FrSa2}.

Our goal in this paper is to present a `Lieb--Thirring' version of the above div-curl inequality. While many of the before-mentioned extensions were motivated by some specific applications, at the moment we do not know of any such application of this new result and consider it as an interesting task to find such. Instead, our motivation comes from trying to understand features of inequalities for orthonormal functions and, as will be pointed out later, the new inequality contains features that sharply distinguish it from existing ones.

To set the stage, we notice that if $(E_n),(B_n)\subset L^2(\R^d,\R^d)$ are normalized and satisfy
\begin{equation}
	\label{eq:divcurln}
	\nabla\wedge E_n = 0 
	\qquad\text{and}\qquad
	\nabla\cdot B_n = 0 
	\qquad\text{for all}\ n \,,
\end{equation}
then
\begin{equation}
	\label{eq:triangle}
	\| \sum_{n=1}^N E_n\cdot B_n \|_{\mathcal H^1} \lesssim N \,.
\end{equation}
This simply follows by the triangle inequality in $\mathcal H^1$ and the bound for a single pair of vector fields. In \eqref{eq:triangle} and below we use the notation $\lesssim$ to denote an inequality with an implicit constant that depends only on the dimension $d$. In the present paper we are not concerned with the values of these constants.

Our main result is that, if the $(E_n)$ are not only normalized, but also orthogonal, and similarly for the $(B_n)$, and if the $\mathcal H^1$-norm is replaced by the $\dot W^{-1,\frac{d}{d-1}}$-norm, then the linear growth in $N$ in \eqref{eq:triangle} can be replaced by a sublinear growth $N^{1-\frac1d}$. The precise formulation is as follows.

\begin{theorem}\label{main}
	Let $d\geq 2$. Then, if $(E_n),(B_n)$ are orthonormal systems in $L^2(\R^d,\R^d)$ satisfying \eqref{eq:divcurln}, one has
	\begin{equation}
		\label{eq:main}
		\| \sum_{n=1}^N E_n\cdot B_n \|_{\dot W^{-1,\frac{d}{d-1}}} \lesssim N^{1-\frac 1d} \,.
	\end{equation}
	If $d\geq 3$, the same is valid either without the orthogonality assumption on the $(E_n)$ or without the orthogonality assumption on the $(B_n)$.
\end{theorem}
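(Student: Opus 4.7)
The plan is to reduce the estimate by duality to a trilinear integral that can be handled by pointwise Cauchy--Schwarz, H\"older's inequality, and Lieb's inequality for orthonormal systems in $\dot H^1$. Since $\dot W^{-1,d/(d-1)}$ is the dual of $\dot W^{1,d}$, the bound \eqref{eq:main} is equivalent to
$$\left|\int_{\R^d} u\,\sum_{n=1}^N E_n\cdot B_n\,dx\right| \lesssim N^{1-1/d}\,\|\nabla u\|_{L^d}$$
for every $u\in \dot W^{1,d}(\R^d)$. The curl-free condition in \eqref{eq:divcurln} lets us write $E_n=\nabla f_n$ with $(f_n)$ orthonormal in $\dot H^1(\R^d)$, and using $\nabla\cdot B_n=0$ to integrate by parts converts the left-hand side into $-\sum_n\int f_n\,\nabla u\cdot B_n\,dx$.

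Next, pointwise Cauchy--Schwarz in the index $n$ bounds the integrand by $\rho_f^{1/2}\,|\nabla u|\,\rho_B^{1/2}$, where $\rho_f=\sum_n|f_n|^2$ and $\rho_B=\sum_n|B_n|^2$. H\"older's inequality with the triple of exponents $(2d/(d-2),\,d,\,2)$---whose reciprocals sum to one---then yields the upper estimate $\|\rho_f\|_{L^{d/(d-2)}}^{1/2}\,\|\nabla u\|_{L^d}\,\|\rho_B\|_{L^1}^{1/2}$. The factor $\|\rho_B\|_{L^1}^{1/2}=N^{1/2}$ follows trivially from the $L^2$-normalization of $(B_n)$, and uses no orthogonality whatsoever. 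The first factor is controlled, for $d\ge 3$, by Lieb's inequality \cite{Li2} for orthonormal systems in $\dot H^1(\R^d)$, which gives $\|\rho_f\|_{L^{d/(d-2)}}\lesssim N^{1-2/d}$. Multiplying the three factors produces exactly $N^{1-1/d}\,\|\nabla u\|_{L^d}$.

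Observe that only the orthogonality of $(E_n)$ has been used, so for $d\ge 3$ the bound already holds without orthogonality of $(B_n)$. For the other half of the second assertion, a symmetric argument replaces the scalar potential of $E_n$ by an antisymmetric-tensor potential $(A_n)_{jk}$ for $B_n$: write $(B_n)_j=\partial_k(A_n)_{jk}$ in a Coulomb-type gauge $\partial_j(A_n)_{jk}=0$, which makes $(A_n)$ orthonormal in $\dot H^1$ whenever $(B_n)$ is in $L^2$. Integration by parts produces, besides the desired boundary-like term, a term pairing $\partial_k E_{n,j}$ with $(A_n)_{jk}$; this vanishes because curl-freeness of $E_n$ renders $\partial_k E_{n,j}$ symmetric in $(j,k)$ while $(A_n)_{jk}$ is antisymmetric. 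The remaining term is bounded by Cauchy--Schwarz and H\"older as above, now with Lieb's inequality applied to $(A_n)$, yielding $N^{1-1/d}$ under orthogonality of $(B_n)$ alone.

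The main obstacle is the case $d=2$, where Lieb's inequality gives no $L^p$ control on $\rho_f$ because $\dot H^1(\R^2)$ embeds into no $L^p$, and no one-sided orthogonality suffices. Both orthonormalities must enter together with the two-dimensional structure: writing $B_n=\nabla^\perp g_n$ with $(g_n)$ orthonormal in $\dot H^1(\R^2)$, the integrand equals $-(\partial_1 f_n\partial_2 g_n-\partial_2 f_n\partial_1 g_n)$, which is both $-\nabla^\perp\!\cdot(f_n\nabla g_n)$ and $\nabla^\perp\!\cdot(g_n\nabla f_n)$. The claim thus reduces to the symmetric $L^2$-estimate $\|\sum_n(f_n\nabla g_n-g_n\nabla f_n)\|_{L^2}\lesssim N^{1/2}$ for $(f_n),(g_n)$ jointly orthonormal in $\dot H^1(\R^2)$, which I expect to be the technical heart of the $d=2$ argument and to require a genuinely bilinear use of both orthogonalities rather than the one-sided Lieb-type estimate available in higher dimensions.
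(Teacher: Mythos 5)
Your proposal correctly reproduces the paper's argument for the second assertion of the theorem (the $d\geq 3$, one-sided-orthogonality case). Writing $E_n=\nabla f_n$, integrating by parts against $B_n$ using $\nabla\cdot B_n=0$, applying the pointwise Cauchy--Schwarz inequality in $n$, H\"older with exponents $(\frac{2d}{d-2},d,2)$, and Lieb's inequality for $\sum_n|f_n|^2$ is precisely the paper's alternative proof in Section~3; your description of the symmetric variant using an antisymmetric tensor potential in Coulomb gauge likewise matches the paper's Hodge-theoretic argument (there phrased via 2-forms, $d^*\alpha_n=\omega_{B_n}$, $d\alpha_n=0$), including the observation that the curl-free part of $\nabla E_n$ pairs to zero against the antisymmetric potential. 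So far, so good.

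However, there is a genuine gap: you never prove the first assertion for $d=2$, which is the part of the theorem that actually requires new input. You correctly observe that Lieb's inequality is unavailable in two dimensions, and you reduce the claim to the estimate
$$
\Big\|\sum_{n=1}^N (f_n\nabla g_n - g_n\nabla f_n)\Big\|_{L^2(\R^2)} \lesssim N^{1/2}
$$
for $(f_n),(g_n)$ orthonormal in $\dot H^1(\R^2)$, but you leave this estimate entirely unproven, calling it ``the technical heart.'' Note also that this $L^2$ bound is strictly \emph{stronger} than the claimed $\dot W^{-1,2}$ bound (the inequality $\|\nabla^\perp\cdot W\|_{\dot H^{-1}}\leq\|W\|_{L^2}$ goes only one way, and $W:=\sum_n(g_n\nabla f_n-f_n\nabla g_n)$ is in general neither divergence- nor curl-free), so you may have over-committed yourself. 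The paper's actual proof of the full $d\geq2$ statement proceeds quite differently and does not attempt any such bilinear $L^2$ estimate: after writing $E_n=Rf_n$ with $R$ the Riesz transform and using $\nabla\cdot B_n=0$, it identifies $\int u\,E_n\cdot B_n\,dx = -\langle [R,u]f_n, B_n\rangle$ and invokes the nontrivial endpoint weak-Schatten bound $\|[R,u]\|_{\mathcal S^d_{\mathrm w}}\lesssim\|\nabla u\|_{L^d}$ (Rochberg--Semmes, Connes--Sullivan--Teleman, Frank--Sukochev--Zanin), combined with the elementary fact that $\sum_{n\leq N}|\langle x_n,Ky_n\rangle|\leq\sum_{n\leq N}s_n(K)\lesssim\|K\|_{\mathcal S^d_{\mathrm w}}N^{1-1/d}$ for orthonormal $(x_n)$, $(y_n)$. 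That commutator estimate is exactly what lets the argument reach $d=2$ and what uses both orthonormalities simultaneously; without it, your proposal establishes only the $d\geq3$ half of the theorem.
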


\begin{remarks}
	(a) The following two features distinguish the above inequality from other known inequalities for orthonormal functions. First, the quantity that is bounded is bilinear rather than a sum of squares, as, for instance, in Corollary \ref{liebsob} below. Second, this quantity is not controlled in an $L^p$ norm, but rather in a norm involving (negative) smoothness.\\
	(b) The inequality \eqref{eq:main} easily generalizes to an inequality for possibly infinite orthonormal systems, namely, for sequences $\lambda =(\lambda_n)$ from the Lorentz sequence space $\ell^{\frac{d}{d-1},1}$ one has, under the previous assumptions on $(E_n),(B_n)$,
	$$
	\| \sum_n \lambda_n E_n\cdot B_n \|_{\dot W^{-1,\frac{d}{d-1}}} \lesssim \|\lambda\|_{\ell^{\frac{d}{d-1},1}} \,.
	$$
	Indeed, by multiplying, say, $E_n$ by a unimodular constant, we may assume that $\lambda_n\geq 0$. Then $\sum_n \lambda_n E_n\cdot B_n = \int_0^\infty \sum_n \1(\lambda_n>\tau) E_n\cdot B_n\,d\tau$ and, so by the triangle inequality and \eqref{eq:main},
	\begin{align*}
		\| \sum_n \lambda_n E_n\cdot B_n \|_{\dot W^{-1,\frac{d}{d-1}}} 
		& \leq \int_0^\infty \| \sum_n \1(\lambda_n>\tau) E_n\cdot B_n \|_{\dot W^{-1,\frac{d}{d-1}}}  \,d\tau \\
		& \lesssim \int_0^\infty (\#\{ n:\ \lambda_n>\tau \})^{1-\frac1d}\,d\tau = \|\lambda\|_{\ell^{\frac{d}{d-1},1}} \,.
	\end{align*}
	(c) Theorem \ref{main} raises several questions: Is the growth $N^{1-\frac1d}$ optimal and is it connected to a semiclassical limit? Is the passage from $\mathcal H^1$ to $\dot W^{-1,\frac d{d-1}}$ necessary for an $N^{1-\frac 1d}$ growth? Is the orthogonality of both $(E_n)$ and $(B_n)$ necessary in dimension $d=2$?  
\end{remarks}

The author warmly thanks Julien Sabin and Jean-Claude Cuenin for many discussions about inequalities for orthonormal systems and possible bilinear variants.

It is a pleasure to dedicate this paper to Fritz Gesztesy on the occasion of his 70th birthday, in fond memory of our editorial collaborations.


\section{Proof of Theorem \ref{main}}

In this section we give a proof of the first part of Theorem \ref{main}, namely in the case where both $(E_n)$ and $(B_n)$ satisfy the orthonormality condition. The advantage of this proof is that it works also in dimension $d=2$. The main ingredient is an endpoint Schatten class bound for commutators, which we recall first.

\subsection*{A Schatten class bound for commutators}

The Riesz transform is the vector valued operator defined by
$$
R = (-i\nabla)(-\Delta)^{-\frac 12} \,.
$$
We are interested in mapping properties of the commutator $[R,u]= Ru - uR$, where we identify the function $u$ with the operator of multiplication with this function. A famous result of Coifman, Rochberg and Weiss \cite{CoRoWe} states that $[R,u]$ is bounded (from $L^2(\R^d)$ to $L^2(\R^d,\R^d)$) if and only if $u\in BMO(\R^d)$. A corresponding result in Schatten classes $\mathcal S^p$ with $p>d$ is due to Jansson and Wolff \cite{JaWo}. A characterization in the endpoint case $p=d$ was obtained by Connes, Semmes, Sullivan and Teleman \cite[Appendix]{CoSuTe}, based on earlier work of Rochberg and Semmes \cite{RoSe}. For recent alternative and direct proof see \cite{LoMDSuZa,FrSuZa}; for a variation of the proof in \cite{CoSuTe} see \cite{Fr4}.

To state this result, we recall that given Hilbert spaces $\mathcal H$ and $\mathcal K$ and an exponent $0<p<\infty$, the weak Schatten class $\mathcal S^p_\w$ consists of all compact operators $K:\mathcal H\to\mathcal K$ whose singular values $(s_n(K))$ belong to the weak sequence space $\ell^p_\w$. The $\mathcal S^p_\w$-(quasi)norm of an operator is the $\ell^p_\w$-(quasi)norm of the squences of singular values, that is,
$$
\| K \|_{\mathcal S^p_\w} = \sup_n n^\frac1p s_n(K) \,.
$$

\begin{theorem}\label{commschatten}
	Let $d\geq 2$. Then, if $u\in\dot W^{1,d}(\R^d)$, one has $[R,u]\in \mathcal S^d_\w$ with
	$$
	\| [R,u] \|_{\mathcal S^d_\w} \lesssim \| \nabla u \|_{L^d} \,.
	$$
\end{theorem}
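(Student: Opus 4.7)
Theorem \ref{commschatten} is by now a well-understood result (see \cite{CoSuTe} and the modern treatments \cite{LoMDSuZa,FrSuZa,Fr4}); in giving a proof I would reduce to the endpoint \emph{Cwikel inequality}
\begin{equation*}
\| f(x)\,g(-i\nabla)\|_{\mathcal S^d_\w(L^2(\R^d))} \;\lesssim\; \|f\|_{L^d(\R^d)}\,\|g\|_{L^{d,\infty}(\R^d)}, \qquad d\geq 2,
\end{equation*}
which for $d\geq3$ is the classical Cwikel inequality and for $d=2$ is the Birman--Solomyak sharpening at the weak Schatten endpoint. The task is then to algebraically massage $[R,u]$ into a sum of operators of this form with $f\in L^d$ and $g\in L^{d,\infty}$.

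The starting point is the Leibniz identity
\begin{equation*}
[R,u] \;=\; -i(\nabla u)(-\Delta)^{-1/2} \;+\; (-i\nabla)\,\big[(-\Delta)^{-1/2},u\big],
\end{equation*}
obtained by writing $R = (-i\nabla)(-\Delta)^{-1/2}$. The first summand is already in Cwikel form with $f=-i\nabla u\in L^d(\R^d,\C^d)$ and $g(\xi)=|\xi|^{-1}\in L^{d,\infty}(\R^d)$, so its $\mathcal S^d_\w$ quasinorm is $\lesssim \|\nabla u\|_{L^d}$. For the second summand I would insert the subordination formula $(-\Delta)^{-1/2}=\pi^{-1/2}\int_0^\infty e^{t\Delta}\,t^{-1/2}\,dt$ together with the Duhamel identity
\begin{equation*}
[e^{t\Delta},u] \;=\; \int_0^t e^{(t-s)\Delta}\,[\Delta,u]\,e^{s\Delta}\,ds, \qquad [\Delta,u] \;=\; 2\nabla u\cdot\nabla + (\Delta u),
\end{equation*}
to express $(-i\nabla)[(-\Delta)^{-1/2},u]$ as an iterated integral over $0\leq s\leq t$ of operators of the schematic form (bounded Fourier multiplier)$\cdot(\nabla u)(x)\cdot$(bounded Fourier multiplier); the apparent $\Delta u$ contribution is re-integrated by parts onto a neighboring heat kernel so that only first derivatives of $u$ appear as multiplication operators. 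Each inner piece is then estimated by Cwikel, yielding an $\mathcal S^d_\w$ bound that decays sufficiently fast in $t,s$, and the $(s,t)$ integration is handled by scaling and H\"older for Lorentz--Schatten classes.

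The main obstacle I anticipate is exactly this final assembly: $\mathcal S^d_\w$ is only a quasi-normed ideal, so the triangle-type inequality for sums or integrals loses a dimensional constant, and combining the per-piece Cwikel bounds requires careful use of H\"older in Lorentz--Schatten classes together with sharp scaling in $t$. For this reason the alternative route based on a dyadic or Haar wavelet decomposition of $u$, as in Rochberg--Semmes and Connes--Sullivan--Teleman \cite{CoSuTe}, is often technically cleaner; if the subordination calculation above became unwieldy I would fall back on that approach.
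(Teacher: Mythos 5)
Your proposal founders on the case $d=2$, which is precisely the nontrivial content of Theorem \ref{commschatten} and the reason the paper needs it (the alternative proof of Theorem \ref{main} covers $d\geq3$, and only Theorem \ref{commschatten} gives $d=2$). The inequality $\|f(x)\,g(-i\nabla)\|_{\mathcal S^2_\w}\lesssim\|f\|_{L^2}\|g\|_{L^{2,\infty}}$ you invoke for $d=2$ is not a theorem: Cwikel's estimate is known to fail at the endpoint $p=2$, and there is no Birman--Solomyak replacement of the form you state. Consequently neither summand of your Leibniz split $[R,u]=-i(\nabla u)(-\Delta)^{-1/2}+(-i\nabla)[(-\Delta)^{-1/2},u]$ lies individually in $\mathcal S^2_\w$ for general $u\in\dot W^{1,2}(\R^2)$. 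This is not a technicality to be patched: the $d=2$ commutator estimate rests on a cancellation internal to $[R,u]$ which the Leibniz decomposition destroys by peeling off the non-commutator term $(\nabla u)(-\Delta)^{-1/2}$. The modern proofs in \cite{LoMDSuZa,FrSuZa} require genuinely new ideas precisely to handle $d=2$, and they work with decompositions that preserve the commutator structure throughout.

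Even for $d\geq3$ the assembly step is more delicate than you indicate. Estimating the inner integrand of $\int_0^t(-i\nabla)e^{(t-s)\Delta}[\Delta,u]e^{s\Delta}\,ds$ piece-by-piece by Cwikel, after distributing $[\Delta,u]=2\nabla u\cdot\nabla+\Delta u$ and integrating the stray second derivative of $u$ onto a neighbouring heat semigroup as you suggest, yields per-piece bounds of order $(t-s)^{-1/2}s^{-1}\|\nabla u\|_{L^d}$ or $(t-s)^{-1}s^{-1/2}\|\nabla u\|_{L^d}$, and neither of these is integrable in $s$ near the endpoints; the naive assembly diverges before one even reaches the $t$-integral. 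A workable version would need a more symmetric redistribution of the derivatives than what you sketch.

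The paper does none of this. It records that Theorem \ref{commschatten} follows from \cite[Corollary 2.8]{RoSe} combined with the appendix of \cite{CoSuTe}, and alternatively derives it from the $\gamma$-matrix bound $\|[\gamma\cdot R,u]\|_{\mathcal S^d_\w}\lesssim\|\nabla u\|_{L^d}$ of \cite[Theorem 1.1]{FrSuZa} via the Clifford-algebra identity $[R_j,u]=\tfrac12\bigl([\gamma\cdot R,u]\gamma_j+\gamma_j[\gamma\cdot R,u]\bigr)$, a clean algebraic reduction. If you want a self-contained argument, the right starting point is the decomposition used in \cite{FrSuZa} or \cite{LoMDSuZa}, or the Rochberg--Semmes wavelet approach you mention as a fallback, not a Cwikel reduction, because the latter is structurally blocked at $d=2$.
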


\begin{proof}
	As mentioned before, this result follows by combining \cite[Corollary 2.8]{RoSe} with \cite[Theorem in the appendix]{CoSuTe}. Let us show alternatively how this theorem can be deduce from the result in \cite{FrSuZa}, which concerns the operator $[\gamma\cdot R,u]$, where $\gamma_1,\ldots,\gamma_d$ are Hermitian $N\times N$ matrices, $N:=2^{[\frac d2]}$, satisfying
	$$
	\gamma_j\gamma_k + \gamma_k\gamma_j = 2\delta_{jk}
	\qquad\text{for all}\ 1\leq j,k\leq d \,.
	$$
	Moreover, we use the notation $\gamma\cdot R=\sum_{j=1}^d \gamma_j R_j$. The operator $[\gamma\cdot R,u]$ acts in $L^2(\R^d,\C^N)$ and we emphasize that the function $u$ is assumed to be scalar, that is, real- or complex-valued, and to act trivially on the factor $\C^N$. By \cite[Theorem 1.1]{FrSuZa},
	$$
	\| [\gamma\cdot R,u] \|_{\mathcal S^d_\w} \lesssim \| \nabla u \|_{L^d} \,.
	$$
	Since, by the anticommutation relations of the $\gamma$-matrices,
	$$
	[R_j,u] = \frac12 \left( [\gamma\cdot R,u]\gamma_j + \gamma_j [\gamma\cdot R,u] \right),
	$$
	this implies the corresponding bound for $[R_j,u]$, as claimed.
\end{proof}


\subsection*{Proof of Theorem \ref{main}}

The facts that $E_n\in L^2(\R^d,\R^d)$ and $\nabla\wedge E_n =0$ imply that there is an $f_n\in L^2(\R^d)$ such that
$$
E_n = R f_n \,,
$$
where $R$ is the Riesz transform defined before. For $u\in\dot W^{1,d}(\R^d)$, we use the assumption $\nabla\cdot B_n = 0$ to write
$$
\int_{\R^d} u E_n\cdot B_n \,dx = \int_{\R^d} \left( u R f_n \cdot B_n - R(uf_n)\cdot B_n \right)dx = - \langle [R,u] f_n, B_n \rangle \,.
$$
Here we write $\langle\cdot,\cdot\rangle$ for the inner produce in $L^2(\R^d,\R^d)$. We now use the simple fact from Hilbert space theory that, if $K$ is a compact operator from $\mathcal H$ to $\mathcal K$ and if $(x_n)$ and $(y_n)$ are orthonormal systems in $\mathcal H$ and $\mathcal K$, respectively, then
$$
\sum_{n=1}^N \left| \langle x_n, K y_n \rangle \right| \leq \sum_{n=1}^N s_n(K) \,.
$$
(This inequality appears in \cite[Theorem 11.5.6]{BiSo} in case $\mathcal H=\mathcal K$. The general case follows by composing with an isometry $\mathcal H\to\mathcal K$ when, without loss of generality, $\dim \mathcal H\leq\dim\mathcal K$.)

For any $p> 1$ and any $N\in\N$, we have
$$
\sum_{n=1}^N s_n(K) \leq \sup_n n^{\frac1p} s_n(K) \sum_{n=1}^N n^{-\frac1p} \leq \frac{p}{p-1} \| K \|_{\mathcal S^p_\w} N^{1-\frac1p} \,.
$$
The last inequality here follows by comparing the sum with an integral.

Combining the previous three relations and noting that the orthonormality of $(E_n)$ implies that of $(f_n)$, we arrive at
$$
\left| \sum_{n=1}^N \int_{\R^d} u E_n\cdot B_n\,dx \right| \leq \sum_{n=1}^N \left| \langle [R,u] f_n, B_n \rangle \right| \leq \frac{d}{d-1} \| [R,u] \|_{\mathcal S^d_\w} N^{1-\frac 1d} \,.
$$
According to Theorem \ref{commschatten}, the right side is bounded by a constant, depending only on $d$, times $\|\nabla u\|_{L^d} N^{1-\frac 1d}$. By duality, this implies the assertion of Theorem \ref{main}.\qed  

\medskip

The above proof of Theorem \ref{main} is modelled after one of the Coifman--Lions--Meyer--Semmes proofs of $E\cdot B\in\mathcal H^1$, namely where this fact is deduced from the boundedness of $[R,u]$ for $u\in BMO$.

\begin{remark}
	If instead of Theorem \ref{commschatten} one applies the Janson--Wolff result \cite{JaWo} saying that $\| [R,u] \|_{\mathcal S^p}\lesssim_p \| u \|_{\dot W^{\frac dp,p}}$ for $d<p<\infty$ (here $\mathcal S^p$ denotes the (strong) Schatten class, defined in terms of the (strong) sequence space $\ell^p$, and $\dot W^{\frac dp,p}$ denotes a homogeneous fractional Sobolev space \cite[Section 6.1]{Le}), then the same method of proof yields
	\begin{equation}
		\label{eq:maininterpol}
			\| \sum_n \lambda_n E_n\cdot B_n \|_{\dot W^{-\frac d{q'},q}} \lesssim_q \|\lambda \|_{\ell^q}
			\qquad\text{for all}\ 1<q<\frac{d}{d-1} \,, q'=\frac{q}{q-1} \,.
	\end{equation}
\end{remark}

The bound \eqref{eq:maininterpol} is somewhat intermediate between \eqref{eq:triangle} and \eqref{eq:main}, in the sense that for $\lambda_n\in\{0,1\}$ the growth in $N:=\#\{n:\ \lambda_n = 1\}$ interpolates between $N$ and $N^{1-\frac 1d}$.


\section{Alternative proof of Theorem \ref{main} for $d\geq 3$}

In this section we give a proof of the second part of Theorem \ref{main}, namely that in dimension $d\geq 3$ only one of the orthogonality assumptions on $(E_n)$ and $(B_n)$ is necessary. The main ingredient is the well-known Cwikel--Lieb--Rozenblum (CLR) inequality, which we recall first.

\subsection*{The CLR inequality and its consequences}

Cwikel \cite{Cw} proved Schatten class bounds for products of multiplication operators in position and in momentum space. We only need the following particular case, which, by the Birman--Schwinger principle, is equivalent to bounds of Lieb \cite{Li} and Rozenblum \cite{Ro} on the number of negative eigenvalues of Schr\"odinger operators. We refer to \cite[Chapter 4]{FrLaWe} for details and further references.

\begin{theorem}\label{cwikel}
	Let $d\geq 3$. Then, if $u\in L^d(\R^d)$, one has $u(-\Delta)^{-\frac 12}\in\mathcal S^d_\w$ with
	$$
	\| u(-\Delta)^{-\frac12} \|_{\mathcal S^d_\w} \lesssim \| u \|_{L^d} \,.
	$$
\end{theorem}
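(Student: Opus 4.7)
The plan is to deduce Theorem~\ref{cwikel} from the Cwikel--Lieb--Rozenblum (CLR) eigenvalue inequality via the Birman--Schwinger principle, as indicated in the paragraph preceding the theorem statement.

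\emph{Step 1: Reduction via Birman--Schwinger.} The singular values of $u(-\Delta)^{-1/2}$ are the square roots of the eigenvalues of the positive compact operator $T=(-\Delta)^{-1/2}|u|^2(-\Delta)^{-1/2}$. Hence the claim $s_n(u(-\Delta)^{-1/2})\lesssim\|u\|_{L^d}\,n^{-1/d}$ is equivalent to the distribution-function estimate $\#\{n:\lambda_n(T)>\lambda\}\lesssim\lambda^{-d/2}\|u\|_{L^d}^d$ for every $\lambda>0$. By the Birman--Schwinger principle, the left-hand side equals the number $N_-(-\Delta-\lambda^{-1}|u|^2)$ of negative eigenvalues of the Schr\"odinger operator $-\Delta-\lambda^{-1}|u|^2$. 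Setting $V=\lambda^{-1}|u|^2$, the theorem reduces to the CLR bound
$$
N_-(-\Delta-V)\lesssim\int_{\R^d}V_+^{d/2}\,dx.
$$

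\emph{Step 2: The CLR inequality.} For this bound I would appeal to one of the classical routes. Lieb's approach uses the Feynman--Kac representation of the heat semigroup together with a convexity argument along Brownian paths to estimate $\Tr(e^{-t(-\Delta-V)}-e^{-t(-\Delta)})_+$, then relates $N_-$ to this trace via the elementary variational bound of the type $N_-(H)\leq e\,\Tr(e^{-H}\1_{\{H\leq -1\}})$ and optimizes over the heat time. Cwikel's more direct approach establishes, by a careful real-interpolation argument between $\mathcal S^2$ and $\mathcal S^\infty$, the weak-Schatten bound $\|f(x)g(-i\nabla)\|_{\mathcal S^p_\w}\lesssim\|f\|_{L^p}\|g\|_{L^p_\w}$ for every $p>2$; with $g(\xi)=|\xi|^{-1}\in L^d_\w(\R^d)$ this yields the theorem directly, bypassing Step~1 altogether. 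Rozenblum's original proof proceeds via a more geometric dyadic decomposition.

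\emph{Main obstacle.} Step~1 is a formal consequence of singular-value calculus and the Birman--Schwinger identity. The substantive difficulty lies in the CLR bound itself: naive complex interpolation between the Hilbert--Schmidt endpoint $p=2$ and the bounded-operator endpoint $p=\infty$ delivers only the strong Schatten bound $\|u(-\Delta)^{-1/2}\|_{\mathcal S^p}\lesssim\|u\|_{L^p}$ for $p>d$, and misses the critical endpoint $p=d$ at which the present theorem lives. Capturing this endpoint---which is precisely what encodes the $L^d$ dependence on $u$---requires either the heat-semigroup convexity underlying Lieb's method, or Cwikel's refined real-interpolation exploiting the specific homogeneity of the multiplier $|\xi|^{-1}$.
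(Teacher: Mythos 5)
Your proposal is correct and matches the paper's treatment: the paper does not prove Theorem~\ref{cwikel} but cites it as the classical Cwikel bound, equivalent via the Birman--Schwinger principle to the Lieb--Rozenblum eigenvalue estimate, referring to \cite{FrLaWe} for details. Your Step~1 reduction (via $s_n(u(-\Delta)^{-1/2})^2=\lambda_n((-\Delta)^{-1/2}|u|^2(-\Delta)^{-1/2})$ and Birman--Schwinger) and your account of the known routes to CLR, including why naive interpolation fails at the endpoint $p=d$, are accurate.
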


Lieb \cite{Li2} observed that as a consequence of this theorem on obtains a Sobolev inequality for orthonormal functions. For a general statement of the underlying duality principle see \cite{FrSa}; for a direct proof of the Sobolev inequality for orthonormal functions see \cite{Ru,Fr1}. Next, we recall Lieb's proof for the sake of completeness and to underline the similarities and differences to the first proof of Theorem \ref{main}. Also, Lieb only considers functions taking values in $\C$, while we will need the case of functions taking values in $\C^M$. (Here it is slightly more natural to consider complex rather than real values.)

\begin{corollary}\label{liebsob}
	Let $d\geq 3$. Then, if $M\in\N$ and $(\psi_n)$ is an orthonormal systems in $\dot H^1(\R^d,\C^M)$, one has
	$$
	\| \sum_{n=1}^N |\psi_n|^2 \|_{L^\frac{d}{d-2}} \lesssim M^\frac2d N^{1-\frac2d} \,.
	$$
\end{corollary}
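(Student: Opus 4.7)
The plan is to follow Lieb's duality argument, using Cwikel's bound in place of Sobolev's inequality, and keeping careful track of the multiplicity introduced by the $\C^M$-valued functions. By the $L^{d/(d-2)}$--$L^{d/2}$ duality, it suffices to prove
$$
\sum_{n=1}^N \int_{\R^d} V |\psi_n|^2\,dx \lesssim M^{\frac{2}{d}} N^{1-\frac{2}{d}} \|V\|_{L^{d/2}}
$$
for every nonnegative $V\in L^{d/2}(\R^d)$.

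Next I would substitute $\phi_n := (-\Delta)^{1/2}\psi_n$, which is an orthonormal system in $L^2(\R^d,\C^M)$ by the assumption on $(\psi_n)$. Then
$$
\sum_{n=1}^N \int_{\R^d} V |\psi_n|^2\,dx
= \sum_{n=1}^N \| V^{1/2} (-\Delta)^{-1/2} \phi_n \|^2
= \sum_{n=1}^N \| T_M \phi_n \|^2 \,,
$$
where $T_M := V^{1/2}(-\Delta)^{-1/2}$ acts on $L^2(\R^d,\C^M)$. Since $(\phi_n)$ is orthonormal, the Ky Fan--type variational principle (the same one invoked in the proof of Theorem \ref{main}, applied to $T_M^*T_M$) gives
$$
\sum_{n=1}^N \|T_M \phi_n\|^2 \leq \sum_{n=1}^N s_n(T_M)^2 \,.
$$

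The main step is to control $s_n(T_M)$ via Theorem \ref{cwikel}. On \emph{scalar} $L^2(\R^d)$ the same operator $T := V^{1/2}(-\Delta)^{-1/2}$ satisfies $\|T\|_{\mathcal S^d_\w} \lesssim \|V^{1/2}\|_{L^d} = \|V\|_{L^{d/2}}^{1/2}$. Because $V^{1/2}$ and $(-\Delta)^{-1/2}$ both act trivially on the $\C^M$-factor, $T_M = T\otimes I_{\C^M}$, so each singular value of $T$ appears with multiplicity $M$ in the singular values of $T_M$; equivalently $s_n(T_M) = s_{\lceil n/M\rceil}(T)$. The bookkeeping with the factor $M$ is the only mildly delicate part: using $\lceil n/M\rceil \geq n/M$, one gets
$$
s_n(T_M) \leq \|T\|_{\mathcal S^d_\w} \,(n/M)^{-1/d} \,,
\qquad\text{hence}\qquad
\|T_M\|_{\mathcal S^d_\w} \lesssim M^{1/d}\|V\|_{L^{d/2}}^{1/2}\,.
$$

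Finally I would combine these: since $d\geq 3$, $2/d<1$ and comparison with an integral gives $\sum_{n=1}^N n^{-2/d} \lesssim N^{1-2/d}$, so
$$
\sum_{n=1}^N s_n(T_M)^2 \leq \|T_M\|_{\mathcal S^d_\w}^2 \sum_{n=1}^N n^{-2/d}
\lesssim M^{2/d} \|V\|_{L^{d/2}} N^{1-2/d} \,,
$$
which is the required bound. Dualizing back yields the corollary. The only real obstacle is the $\C^M$ multiplicity accounting in the passage from $T$ to $T_M$; everything else is a direct transcription of Lieb's scalar argument.
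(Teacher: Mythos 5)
Your proof is correct and follows essentially the same route as the paper: dualize to a trace-type bound, substitute $\phi_n=(-\Delta)^{1/2}\psi_n$ to convert the $\dot H^1$-orthonormality into $L^2$-orthonormality, bound $\sum_{n\le N}\|T_M\phi_n\|^2$ by $\sum_{n\le N}s_n(T_M)^2$, and invoke the weak-$\mathcal S^d$ bound from Theorem \ref{cwikel}. The only cosmetic difference is that you write the dual weight as $V=u^2$ (i.e.\ test against $V\in L^{d/2}$ rather than $u\in L^d$), and you spell out the step $\|T\otimes I_{\C^M}\|_{\mathcal S^d_\w}\lesssim M^{1/d}\|T\|_{\mathcal S^d_\w}$ via the singular-value multiplicity $s_n(T_M)=s_{\lceil n/M\rceil}(T)$, which the paper states without elaboration.
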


\begin{proof}
	Set $f_n:=(-\Delta)^{\frac12}\psi_n$, so that $(f_n)$ is an orthonormal system in $L^2(\R^d,\C^M)$. For $u\in L^d(\R^d,\R)$ we have, similarly as in the first proof of Theorem \ref{main},
	$$
	\sum_{n=1}^N \int_{\R^d} u^2 |\psi_n|^2 \,dx = \sum_{n=1}^N \| u (-\Delta)^{-\frac12} f_n \|_{L^2}^2 \leq \sum_{n=1}^N s_n(K)^2 \,,
	$$
	where $K:=u(-\Delta)^{-\frac12}\otimes\1_{\C^M}$, considered as an operator in $L^2(\R^d,\C^M)= L^2(\R^d)\otimes\C^M$. For $p>2$ we can bound, similarly as before,
	$$
	\sum_{n=1}^N s_n(K)^2 \leq \left( \sup_n n^\frac1p s_n(K) \right)^2 \sum_{n=1}^N n^{- \frac2p} \leq \frac{p}{p-2} \|K\|_{\mathcal S^p_\w}^2 N^{1-\frac2p} \,.
	$$
	By Theorem \ref{cwikel}, we have
	$$
	 \|K\|_{\mathcal S^d_\w} \leq M^\frac1d \| u(-\Delta)^{-\frac12} \|_{\mathcal S^d_\w} \lesssim M^\frac1d \| u \|_{L^d} \,.
	$$
	By duality, this implies the assertion of the corollary.
\end{proof}


\subsection*{Alternative proof of Theorem \ref{main} for $d\geq 3$}

We now show that for inequality \eqref{eq:main} the orthogonality of only one of the systems $(E_n)$ and $(B_n)$ is necessary, provided $d\geq 3$. We emphasize that both systems are assumed to be normalized.

\medskip

\emph{First case.} We begin by assuming the orthogonality of the $(E_n)$. The facts that $E_n\in L^2(\R^d,\R^d)$ and $\nabla\wedge E_n =0$ imply that there is a $\phi_n\in \dot H^1(\R^d)$ such that
$$
E_n = \nabla \phi_n \,.
$$
The divergence assumption on $B_n$ implies that
$$
E_n\cdot B_n = \nabla \phi_n\cdot B_n = \nabla\cdot (\phi_n B_n) \,.
$$
Thus,
$$
\int_{\R^d} u E_n\cdot B_n\,dx = - \int_{\R^d} \nabla u\cdot (\phi_n B_n)\,dx \,.
$$
It follows from H\"older's inequality that
$$
\left| \sum_{n=1}^N \int_{\R^d} u E_n\cdot B_n\,dx \right| \leq \| \nabla u\|_{L^d} \| \sum_{n=1}^N \phi_n B_n \|_{L^\frac{d}{d-1}} \,.
$$
By the pointwise Schwarz inequality
$$
\left| \sum_{n=1}^N \phi_n B_n \right| \leq \left( \sum_{n=1}^N \phi_n^2 \right)^\frac{1}{2}\left( \sum_{n=1}^N |B_n|^2 \right)^\frac{1}{2}
$$
and the H\"older inequality (noting $\frac{2(d-1)}{d}=\frac{d-2}{d}+\frac11$), we obtain
$$
\| \sum_{n=1}^N \phi_n B_n \|_{L^\frac{d}{d-1}} \leq \| \sum_{n=1}^N \phi_n^2 \|_{L^\frac{d}{d-2}}^\frac12 \| \sum_{n=1}^N |B_n|^2 \|_{L^1}^\frac12 \,.
$$
By the normalization of $(B_n)$, we have
$$
\| \sum_{n=1}^N |B_n|^2 \|_{L^1}=N \,.
$$
Meanwhile, by Corollary \ref{liebsob}, observing that the $(\phi_n)$ are orthonormal in $\dot H^1(\R^d)$,
$$
\| \sum_{n=1}^N \phi_n^2 \|_{L^\frac{d}{d-2}} \lesssim N^{1-\frac{2}{d}} \,.
$$
We emphasize that the assumption $d\geq 3$ is needed for the latter inequality. Combining the previous bounds and using duality, we obtain the assertion of the theorem.

\medskip

\emph{Second case.}
Now assume the orthogonality of the $(B_n)$. For pedagogical reasons, we first give the proof in dimension $d=3$, where, given a vector field $F$, we identify the skew-symmetric $3\times 3$-matrix field $\nabla\wedge F$ in the usual way with a vector field.

The facts that $B_n\in L^2(\R^3,\R^3)$ and $\nabla\cdot B_n =0$ imply that there is an $A_n\in \dot H^1(\R^3,\R^3)$ such that
$$
B_n = \nabla \wedge A_n
\qquad\text{and}\qquad
\nabla\cdot A_n = 0 \,.
$$
Integrating by parts and using the curl assumption on $E_n$ we find
\begin{align*}
	\int_{\R^3} u E_n\cdot B_n\,dx & = \int_{\R^3} u E_n \cdot (\nabla\wedge A_n)\,dx = \int_{\R^3} (\nabla\wedge(u E_n))\cdot A_n \,dx \\
	& = \int_{\R^3} ((\nabla u)\wedge E_n)\cdot A_n\,dx 
	= \int_{\R^3} (\nabla u)\cdot (E_n\wedge A_n) \,dx \,.
\end{align*}
It follows from H\"older's inequality that
$$
\left| \sum_{n=1}^N \int_{\R^3} u E_n\cdot B_n\,dx \right| \leq \| \nabla u\|_{L^3} \| \sum_{n=1}^N E_n \wedge A_n \|_{L^\frac{3}{2}} \,.
$$
By the pointwise Schwarz inequality
$$
\left| \sum_{n=1}^N E_n \wedge A_n \right| \leq \left( \sum_{n=1}^N |E_n|^2 \right)^\frac{1}{2}\left( \sum_{n=1}^N |A_n|^2 \right)^\frac{1}{2}
$$
and the H\"older inequality (noting $\frac{4}{3}=\frac11 + \frac{1}{3}$), we obtain
$$
\| \sum_{n=1}^N E_n A_n \|_{L^\frac32} \leq \| \sum_{n=1}^N |E_n|^2 \|_{L^1}^\frac12 \| \sum_{n=1}^N |A_n|^2 \|_{L^3}^\frac12 \,.
$$
By the normalization of $(E_n)$, we have
\begin{equation}
	\label{eq:normalizatione}
	\| \sum_{n=1}^N |E_n|^2 \|_{L^1}=N \,.
\end{equation}
Meanwhile, by Corollary \ref{liebsob} (with $M=3$), observing that the $(A_n)$ are orthonormal in $\dot H^1(\R^d)$, states that
\begin{equation}
	\label{eq:liebsoboleva}
	\| \sum_{n=1}^N |A_n|^2 \|_{L^3} \lesssim N^{1-\frac{2}{3}} \,.
\end{equation}
The orthonormality of the $A_n$ follows by polarization from the identity
$$
\|\nabla F\|^2_{L^2} = \|\nabla\wedge F \|_{L^2}^2 + \|\nabla\cdot F\|_{L^2}^2 \,,
$$
valid for any $F\in\dot H^1(\R^d,\R^d)$.

Combining the previous bounds and using duality, we obtain the assertion of the theorem in dimension $d= 3$.

\medskip

In general dimension $d\geq 3$ the proof is most naturally carried out in the language of differential forms. For the relevant definitions and theorems we refer, for instance, to \cite{IwMa,IwScSt}. We identify a vector field $F$ on $\R^d$ with the 1-form
$$
\omega_F = F_1\,dx_1 + \cdots + F_d\,dx_d \,.
$$
The fact that $B_n$ is divergence-free translates into the equation
$$
d^* \omega_{B_n} = 0 \,,
$$
where $d^*$ is the Hodge codifferential, that is, the formal adjoint of the exterior derivative $d$. (It should be clear from the context whether the letter $d$ denotes the dimension or the exterior derivative.) By the Hodge decomposition, there is an $\alpha_n\in \dot H^1(\R^d,\Lambda^2)$ such that
$$
d^*\alpha_n = \omega_{B_n}
\qquad\text{and}\qquad
d\alpha_n = 0 \,.
$$
In terms of the Hodge star operator we can write the first equation as
$$
*\omega_{B_n} = d*\alpha_n \,.
$$
This implies the following identity of $d$-forms,
\begin{align*}
	E_n\cdot B_n\,dx = \omega_{E_n} \cdot \omega_{B_n} \,dx = \omega_{E_n} \wedge(* \omega_{B_n}) = \omega_{E_n} \wedge d*\alpha_n = - d (\omega_{E_n} \wedge *\alpha_n ) \,.
\end{align*}
In the last equality we used the fact that $E_n$ is curl-free, which translates into $d\omega_{E_n}=0$. Multiplying by $u\in\dot W^{1,d}(\R^d)$ and integrating by parts yields
$$
\int_{\R^d} u E_n\cdot B_n\,dx = - \int_{\R^d} du\wedge \omega_{E_n}\wedge *\alpha_n \,.
$$
With this identity at hand, we can argue similarly as before. By H\"older's inequality and the pointwise isometry of the Hodge star operator, we have
$$
\left| \sum_{n=1}^N \int_{\R^d} u E_n\cdot B_n\,dx \right| \leq \|\nabla u\|_{L^d} \| \sum_{n=1}^N |E_n|^2 \|_{L^2}^\frac12 \| \sum_{n=1}^N |\alpha_n|^2 \|_{L^\frac{d}{d-2}}^\frac12 \,.
$$
The normalization of $(E_n)$ gives \eqref{eq:normalizatione}. Moreover, we claim that we have the analogue of \eqref{eq:liebsoboleva} with $A_n$, $L^3$ and $\frac23$ replaced by $\alpha_n$, $L^{\frac d{d-2}}$ and $\frac2d$, respectively. Indeed, each $\alpha_n$ can be written as
$$
\alpha_n = \sum_{j<k} \alpha_n^{(j,k)} dx_j\wedge dx_k
$$
with $\alpha_n^{(j,k)}\in\dot H^1(\R^d)$ and
$$
\| B_n \|_{L^2}^2 = \|\omega_{B_n}\|_{L^2}^2 = \| d\alpha_n\|_{L^2}^2 + \| d^* \alpha_n \|_{L^2}^2 = \sum_{j<k} \| \nabla \alpha_n^{(j,k)} \|_{L^2}^2 \,.
$$
(The last equality is essentially that below \cite[Eq.\ (3.3)]{IwMa}.) Moreover,
$$
|\alpha_n|^2 = \sum_{j<k} |\alpha_n^{(j,k)}|^2 \,.
$$
Thus, we can consider $\alpha_n$ as an element of $\dot H^1(\R^d,\R^{\frac{d(d-1)}{2}})$. Polarizing the above identity for the derivatives of $\alpha_n$, we deduce from the orthonormality of the $(B_n)$ in $L^2(\R^d,\R^d)$ that of $(\alpha_n)$ in $\dot H^1(\R^d,\R^{\frac{d(d-1)}{2}})$. Thus, the claimed analogue of \eqref{eq:liebsoboleva} is a consequence of Corollary \ref{liebsob} (with $M=\frac{d(d-1)}{2}$). This completes the alternative proof of Theorem \ref{main} without the orthogonality of $(E_n)$.\qed


\bibliographystyle{amsalpha}

\end{document}